\newcommand{\F}{{\mathbb F}}
\newcommand{\K}{{\mathbb K}}
\newcommand{\R}{{\mathbb R}}
\newcommand{\Q}{{\mathbb Q}}
\newcommand{\Z}{{\mathbb Z}}
\newcommand{\ta}{{\theta }}
\newcommand{\sqf}{{\rm sqf}}
\newcommand{\rank}{{\rm rank}}
\newcommand{\Gal}{{\rm Gal}}
\theoremstyle{plain}
\numberwithin{equation}{section}
\newtheorem{thm}{Theorem}[section]
\newtheorem{theorem}[thm]{Theorem}
\newtheorem{example}[thm]{Example}
\newtheorem{proposition}[thm]{Proposition}
\newtheorem{remark}[thm]{Remark}
\begin{document}

\setcounter{page}{1}

\title[On $\ta$-congruent numbers]{On $\ta$-congruent numbers on real quadratic number fields}
\author{Ali S. Janfada}
\address{Department of Mathematics\\
                Urmia University\\
                Urmia\\
                Iran}
\email{a.sjanfada@urmia.ac.ir,   asjanfada@gmail.com}
\thanks{}
\author{Sajad Salami}
\address{Instituto da Mathemçática  e Estatistica,
         UERJ, Brazil}
\email{sajad.salami@ime.uerj.br}
\begin{abstract}
Let $\K=\Q(\sqrt{m})$ be a real quadratic number field, where $m>1$ is a squarefree integer.
Suppose that $0 < \ta< \pi$ has rational cosine, say $\cos (\ta)=s/r$ with $0< |s|<r$ and $\gcd(r,s)=1$. 
A positive integer $n$ is called a $(\K,\ta)$-congruent number 
if there is a triangle, called the $(\K,\ta, n)$-triangles, with sides in $\K$ having $\ta$ as an angle 
and $n\alpha_\ta$ as  area, where ${\alpha_\ta}=\sqrt{r^2-s^2}$. 
Consider the $(\K,\ta)$-congruent number 
elliptic curve $E_{n,\ta}: y^2=x(x+(r+s)n)(x-(r-s)n)$ defined over $\K$. 
Denote the squarefree part of  positive integer $t$ by $\sqf(t)$. 
In this work, it is proved that if $m\neq \sqf(2r(r-s))$ and $mn\neq  2, 3, 6$,  
then $n$ is a $(\K,\ta)$-congruent number  if and only 
if the Mordell-Weil group $E_{n,\ta}(\K)$
has positive rank, and  all of the  $(\K,\ta, n)$-triangles are classified in four types. 
\end{abstract}
\maketitle
{\bf Keywords:}  $\ta$-congruent number, elliptic curve, real quadratic number field. \\
\par {\bf Subject class [2010]: }{Primary 11G05, Secondary 14H52}

\section{Introduction}
\label{Intro}
A positive integer $n$ is called a {\it congruent number} if it 
is the area of a right triangle with rational sides.
Finding all  congruent numbers is one of the classical problems 
in the modern number theory.  
We cite \cite{kob1} for an  exposition of the congruent number problem,
and \cite{fujw1} to see the first study of  $\ta$-congruent numbers as a generalization of the classic one.
Let $0 < \ta< \pi$ has rational cosine $\cos (\ta)=s/r$ with $0< |s|<r$ 
and $\gcd(r,s)=1$. Let $(U,V,W)_\ta$ denote a triangle with an angle $\ta$ 
between sides $U$ and $V$. A positive integer $n$ is called a $\ta$-{\it congruent number} if 
there exists a  triangle $(U,V,W)_\ta$ with  sides in $\Q$ having area  
$n\alpha_\ta$, where ${\alpha_\ta}=\sqrt{r^2-s^2}$. In other words, $n$ is a $\ta$-congruent number if it satisfies
$$2rn=UV, \ \ W^2=U^2+V^2 -\frac{2s}{r}UV.$$
An ordinary  congruent number is  nothing but  
a $\pi/2$-congruent number. Clearly,  if  $n$ is a $\ta$-congruent number, then so is $nt^2$, for any positive integer $t$.  We shall concentrate on squarefree numbers whenever $\ta$-congruent numbers concerned. 
Let 
 $$E_{n,\ta}: \ y^2=x(x+(r+s)n)(x-(r-s)n)$$ 
be the $\ta$-congruent number elliptic curve, where $r$ and $s$ are  as above. 
Theorem \ref{fuj1}  gives an important connection between  $\ta$-congruent numbers and the Mordell-Weil group $E_{n,\ta}(\Q)$. 
For more information and recent results about $\ta$-congruent numbers see \cite{fujw2,ajs,SY1}.

The notion $\ta$-congruent number, which is defined over $\Q$, can be extended in a natural way 
over real quadratic number fields $\K$. In this case, we refer to $n$ as a {\it $(\K,\ta)$-congruent number}
and to the triangle $(U,V,W)_\ta$ as a  $(\K,\ta,n)$-{\it triangle}. 
When $n$ is not a $\ta$-congruent number over $\Q$, a  question  proposed naturally:
{\it Is $n$  a $(\K,\ta)$-congruent number for  some real quadratic number field $\K$?}
Tada \cite{tada} answered this question in the case $\ta=\pi/2$, by studying the structure of the $\K$-rational points on the elliptic curve $E_{n, \pi/2}: y^2=x(x^2-n^2)$. In this paper,  we answer the above question  for any$0 <\ta <\pi$ and  classify all $(\K,\ta,n)$-triangles. 
Through the paper we shall consider  $\K=\Q(\sqrt{m})$ to be a real quadratic field, where  $m>1$ is  squarefree. We denote the squarefree part of any positive integer $N$ by $\sqf(N)$.
The main results of this paper are the following theorems. 
\begin{theorem}
\label{th1}
Let $n$ be a positive squarefree integer with $\gcd(m,n)=1$  
such that $mn\neq 2,3,6$  and $m\neq \sqf(2r(r-s))$, where $m,r,s$ are as before. Then $n$ is a $(\K,\ta)$-congruent 
number if and only if $\rank(E_{n,\ta}(\K))>0$. Moreover, $n$ is a $(\K,\ta)$-congruent number  if and only if either $n$ 
or $mn$ is a $\ta$-congruent number over $\Q$.
\end{theorem}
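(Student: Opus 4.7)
The plan is to reduce the question over $\K$ to two parallel questions over $\Q$ via the quadratic twist of $E_{n,\ta}$ by $m$, and then to invoke Fujiwara's Theorem~\ref{fuj1} on each. The geometric input will be an explicit bijection between $(\K,\ta,n)$-triangles and non-torsion $\K$-points of $E_{n,\ta}$; the arithmetic input will be a rank identity coming from the quadratic twist together with a torsion analysis controlling when the $\K$-torsion is strictly larger than the $\Q$-torsion.

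First I would establish the $\Q$-isomorphism $E_{n,\ta}^{(m)}\cong E_{mn,\ta}$ by a direct change of variables: the twisted model $my^{2}=x(x+(r+s)n)(x-(r-s)n)$, under $(x,y)\mapsto(X/m,\,Y/m^{2})$, becomes $Y^{2}=X(X+(r+s)mn)(X-(r-s)mn)$. Combined with the standard rank decomposition $\rank(E(\K))=\rank(E(\Q))+\rank(E^{(m)}(\Q))$ for a quadratic extension, this yields
\begin{equation*}
\rank(E_{n,\ta}(\K))=\rank(E_{n,\ta}(\Q))+\rank(E_{mn,\ta}(\Q)).
\end{equation*}
Since $\gcd(m,n)=1$ with $m,n$ squarefree, $mn$ is squarefree, and the assumption $mn\neq 2,3,6$ places both $n$ and $mn$ in the range where Theorem~\ref{fuj1} applies over $\Q$.

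Next I would transport the triangle-point correspondence from $\Q$ to $\K$: the explicit algebraic formulas sending a triangle $(U,V,W)_\ta$ to a point of $E_{n,\ta}$, and back, are formally field-independent, and so give a bijection (up to the obvious sign and ordering symmetries) between $(\K,\ta,n)$-triangles and those $\K$-points $(x,y)$ with $y\neq 0$. The main step is then to show that under the stated exclusions every $\K$-point of finite order on $E_{n,\ta}$ is already defined over $\Q$, so that non-degenerate triangles correspond precisely to $\K$-points of infinite order. Using the division polynomials of $E_{n,\ta}$ one would show that a genuinely new $\K$-rational point of order $4$ forces $\sqrt{m}\in\Q(\sqrt{2r(r-s)})$, i.e.\ $m=\sqf(2r(r-s))$; that sporadic torsion of order $3$ or $6$ forces $mn\in\{2,3,6\}$; and that the remaining possibilities are ruled out by the known torsion bounds over quadratic fields. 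This torsion analysis is where I expect the main technical difficulty.

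Putting it all together, the first ``iff'' follows from the correspondence together with the torsion analysis, since $n$ being $(\K,\ta)$-congruent is equivalent to the existence of a $\K$-point of infinite order on $E_{n,\ta}$, hence to $\rank(E_{n,\ta}(\K))>0$. The second ``iff'' then follows by inserting this equivalence into the rank identity above and applying Theorem~\ref{fuj1} separately to $E_{n,\ta}(\Q)$ and $E_{mn,\ta}(\Q)$, which characterise positive $\Q$-rank precisely as $n$, respectively $mn$, being a $\ta$-congruent number over $\Q$.
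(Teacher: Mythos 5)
Your proposal follows the same architecture as the paper's proof: the twist identification $E_{n,\ta}^{(m)}\cong E_{mn,\ta}$ (the paper's Remark \ref{rem1}), the rank decomposition of Theorem \ref{A}, a triangle--point dictionary over $\K$, and a torsion computation showing that under the stated exclusions $T_{n,\ta}(\K)=E_{n,\ta}[2](\K)$, after which both ``iff''s drop out exactly as you describe. The one place you diverge is the torsion step: the paper does not use division polynomials or general torsion bounds over quadratic fields, but instead combines Kwon's injection $T(E,\K)/T(E,\Q)\hookrightarrow T(E^{(m)},\Q)$ with Fujiwara's complete classification of $T_{N,\ta}(\Q)$ (Theorem \ref{fuj2}, applied to $N=mn$) to reduce to the dichotomy $\Z/2\oplus\Z/2$ versus $\Z/2\oplus\Z/4$, and then rules out new points of order $4$ via Knapp's halving criterion (Theorem \ref{C}), which is where the condition $m\neq\sqf(2r(r-s))$ appears; your route would reach the same conclusion but at greater computational cost, and your claim that order-$3$ torsion forces $mn\in\{2,3,6\}$ is most cleanly justified by exactly this Kwon--Fujiwara combination rather than by the $3$-division polynomial. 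One small imprecision: the explicit correspondence is between triangles and points of $2E_{n,\ta}(\K)\setminus\{\infty\}$ (the inverse map involves square roots that need not lie in $\K$ for an arbitrary point with $y\neq 0$), so what you actually want, and what the paper states, is the weaker equivalence $S\neq\emptyset\iff E_{n,\ta}(\K)\setminus E_{n,\ta}[2](\K)\neq\emptyset$; this does not affect the argument. Finally, note that both your sketch and the paper quietly assume $T_{n,\ta}(\Q)\cong\Z/2\oplus\Z/2$, which by Theorem \ref{fuj2} is automatic only when $n\notin\{1,2,3,6\}$ (a condition not imposed by the hypotheses), so this is a shared soft spot rather than a defect of your proposal relative to the paper.
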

Theorem \ref{th1} is an extension of Part (2) of Theorem \ref{fuj1} in the following. 
Note that the non-equality conditions for $mn$ and $m$ in Theorem \ref{th1} are necessary. 
For a counterexample, when $n=1$ and $\ta= 2\pi/3$, we have $r=2$, $s=-1$, 
$\alpha_{\ta} =\sqrt{3}$. Now taking $m=3=\sqf(2r(r-s))$, there is a 
$(\Q(\sqrt{3}), \ta,1)$-triangle with sides $(2,2,2\sqrt{3})$ and area $\sqrt{3}$ 
but using  Theorem \ref{A},  $\rank(E_{1,\ta}(\Q(\sqrt{3})))= \rank(E_{1,\ta}(\Q)) +\rank(E_{3,\ta}(\Q))=0.$

The following theorem classifies all types of $(\K,\ta,n)$-triangles.
\begin{theorem}
\label{th2}
Assume that $n$ is not  a $\ta$-congruent number over $\Q$ and let $\sigma$ be 
the generator of  $\Gal(\K/\Q)$.  Then 
any $(\K,\ta,n)$-triangle with $(U,V,W) \in (\K^*)^3$ and $(0<U\leq V < W )$
 is necessarily one of the following types:
 \begin{enumerate}[\upshape {\it Type} 1.]
 \item $U \sqrt{m}$, $V \sqrt{m}$, $W \sqrt{m} \in \Q$;
 \item $U$, $V$, $W \sqrt{m} \in \Q$;
 \item $U$, $V \in \K\backslash \Q$ such that $\sigma(U)=V$, $W\in \Q$; 
 \item $U$, $V \in \K\backslash \Q$ such that $\sigma(U)=-V$, $W\in \Q$.
 \end{enumerate}
  \end{theorem}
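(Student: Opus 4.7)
The plan is to analyze a $(\K,\ta,n)$-triangle $(U,V,W)$ by combining the Galois action of the non-trivial $\sigma\in\Gal(\K/\Q)$ on the triangle data with the standard correspondence from such triangles to $\K$-rational points on $E_{n,\ta}$.

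First, from $UV=2rn\in\Q$ and the law of cosines $W^2=U^2+V^2-(2s/r)UV=U^2+V^2-4sn$ one reads off $W^2\in\Q$, so $\sigma(W)^2=W^2$ and hence $\sigma(W)=\pm W$; equivalently, $W\in\Q$ or $W\in\sqrt{m}\,\Q$. Writing $U=a+b\sqrt{m}$ and $V=c+d\sqrt{m}$ with $a,b,c,d\in\Q$, the rationality of $UV$ and of $U^2+V^2$ translates to $ad+bc=0$ and $ab+cd=0$; a short linear-algebra step (write $(a,b)=\lambda(-c,d)$, then use $cd(1-\lambda^2)=0$) splits the possibilities for $(U,V)$ into four cases, possibly overlapping at isosceles degenerations: \textbf{(A)} $\sigma(U)=-V$, \textbf{(B)} $\sigma(U)=V$, \textbf{(C1)} $U,V\in\sqrt{m}\,\Q$, or \textbf{(C2)} $U,V\in\Q$.

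Combining the two options for $W$ with the four cases for $(U,V)$ yields eight combinations; four of them are exactly the stated types (Type 1 = (C1)$+$($W\in\sqrt{m}\,\Q$), Type 2 = (C2)$+$($W\in\sqrt{m}\,\Q$), Type 3 = (B)$+$($W\in\Q$) with $U,V\notin\Q$, Type 4 = (A)$+$($W\in\Q$), noting that case (A) already forces $U,V\notin\Q$). The combination (C2)$+$($W\in\Q$) is immediate — the triangle is $\Q$-rational, contradicting the hypothesis on $n$. To rule out the other three forbidden combinations I introduce the point $P=(W^2/4,\,W(U^2-V^2)/8)$, which lies on $E_{n,\ta}(\K)$ by the identities $(U\pm V)^2=W^2\pm 4n(r\pm s)$. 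A case-by-case computation of $\sigma(P)$ using the sign rules for $\sigma(U),\sigma(V),\sigma(W)$ produces $\sigma(P)=P$ in each of these three combinations, so $P\in E_{n,\ta}(\Q)$. If $P$ is non-torsion, then $\rank(E_{n,\ta}(\Q))>0$ and Theorem~\ref{fuj1} forces $n$ to be a $\ta$-congruent number over $\Q$, a contradiction.

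It remains to handle the torsion case. Using the torsion structure of $E_{n,\ta}(\Q)$ furnished by Theorem~\ref{fuj1}, $P$ torsion forces $y_P=W(U^2-V^2)/8=0$, hence $U=V$ since $W>0$. Tracing this isosceles collapse in each of the three combinations: in (B)$+$($W\in\sqrt{m}\,\Q$) with $U,V\notin\Q$, the relation $\sigma(U)=V=U$ places $U\in\Q$, a contradiction; in (C1)$+$($W\in\Q$) with $U\neq V$ (which is the standing restriction once the Type 4 overlap with (A) is carved off), it directly contradicts $U\neq V$; and in (A)$+$($W\in\sqrt{m}\,\Q$) the condition $U=V$ forces $U\in\sqrt{m}\,\Q$, so the triangle falls into Type 1 via the natural overlap between (A) and (C1). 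The main obstacle is controlling the torsion of $E_{n,\ta}(\Q)$: Mazur's theorem a priori allows $\Z/2\oplus\Z/2k$ with $k\leq 4$, so the whole argument depends on the torsion-structure component of Theorem~\ref{fuj1} to rule out torsion points with non-zero $y$-coordinate.
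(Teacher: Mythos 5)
There is a genuine gap at the very first step, and it is not a small one. You claim that from $UV=2rn\in\Q$ and the law of cosines ``one reads off $W^2\in\Q$.'' This does not follow: $W^2=U^2+V^2-4sn$, and $U^2+V^2$ need not be rational for $U,V\in\K^*$ with $UV\in\Q$ (e.g.\ $U=1+\sqrt{m}$, $V=2rn/U$ gives $U^2+V^2\notin\Q$ in general). The statement $W^2\in\Q$ is equivalent to $W\in\Q\cup\sqrt{m}\,\Q$, which is precisely the dichotomy built into all four types of the theorem --- it is a substantive part of the conclusion, not a consequence of the triangle equations. Indeed, Theorem \ref{th3} of the paper shows that $(\K,\ta,n)$-triangles with $W\notin\Q$ and $W\sqrt{m}\notin\Q$ genuinely exist (exactly when both $n$ and $mn$ are $\ta$-congruent over $\Q$), so $W^2\in\Q$ can only be established by \emph{using} the hypothesis that $n$ is not $\ta$-congruent over $\Q$; your argument invokes that hypothesis only later, after the case split that already depends on $U^2+V^2\in\Q$. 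Everything downstream --- the relations $ad+bc=0$, $ab+cd=0$, and the resulting four cases (A), (B), (C1), (C2) --- rests on this unproven premise.

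The mechanism that actually delivers both $W^2\in\Q$ and the four types simultaneously is the one the paper uses: take a point $P=(x,y)\in E_{n,\ta}(\K)\setminus E_{n,\ta}[2](\K)$ with $2P=\varphi(U,V,W)$, express $U,V,W$ rationally in $x,y$ via the duplication formula, and observe that $P+\sigma(P)\in E_{n,\ta}(\Q)$ must be a $2$-torsion point because $n$ is not $\ta$-congruent over $\Q$ (Theorem \ref{fuj1}(1)). The four possible values $\infty$, $(0,0)$, $(-(r+s)n,0)$, $((r-s)n,0)$ of $P+\sigma(P)$ then yield exactly Types 1--4, with the rationality of $W$ or $W\sqrt{m}$ falling out of each case. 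Your later ingredients (the point $(W^2/4,\,W(U^2-V^2)/8)$, the use of Fujiwara's theorem to control torsion, the isosceles degenerations) are sound in themselves and could be reused, but as written the proof assumes at the outset what the Galois/elliptic-curve argument is needed to prove. A further minor point: when you conclude from a non-torsion rational point that $n$ is $\ta$-congruent, you should cite part (1) of Theorem \ref{fuj1} rather than the rank criterion of part (2), since the latter excludes $n=1,2,3,6$, a restriction not present in Theorem \ref{th2}.
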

Let $A=\sqf (r^2-s^2)$, $B=\sqf (2r(r-s))$ and $C=\sqf (2r(r+s))$. The following proposition shows when there is no $(\K,\ta,n)$-triangle of Types $2$, $3$ and $4$.
\begin{proposition}\label{L1}
Let $p$ be a prime number and the pair $(m, A)$ (resp. $(m, B)$ and $ (m, C )$)   can  be written as 
$(p^\alpha a, p^\beta b )$, where $\alpha, \beta \in \{0,1 \}$ and $gcd(p, ab)=1$. 
 Then there is no $(\K,\ta,n)$-triangle of Type 2 (resp. Type 3 and Type 4) whenever one of the following conditions hold.
 \begin{enumerate}[\upshape  (1)]
  \item $\underline{p=2:}$  
      $(\alpha, \beta)=(0,0)$ and $(a,b) \mathop   \equiv \limits^{4 }  (3,3)$, \\ 
     $(\alpha, \beta)=(0,1)$ and $(a,b) \mathop   \equiv \limits^{8 }  
     (3,1), (3,5), (7,5), (7,7)$, \\
     $(\alpha, \beta)=(1,0)$ and $(a,b) \mathop   \equiv \limits^{8 } 
       (1,3), (1,5), (3,5), (3,7), (5,3), (5,7),(7,3), (7,7)$,\\
      $(\alpha, \beta)=(1,1)$ and  $(a,b) \mathop   \equiv \limits^{8 } 
      (1,3), (1,5), (3,1), (3,3),(5,1), (5,7),(7,5), (7,7)$;
 \item   $\underline{p \mathop \equiv \limits^{4 } 1 :}$ 
    $(\alpha, \beta)=(0,1)$ and $\big(\frac{a}{p}\big)=-1$, \ 
     $(\alpha, \beta)=(1,0)$ and $\big(\frac{b}{p} \big)=-1$, \\
     $(\alpha, \beta)=(1,1)$ and $\big(\frac{a}{p} \big)\big(\frac{b}{p} \big)=-1$;
  \item  $\underline{p \mathop \equiv \limits^{4 } 3 :}$  
       $(\alpha, \beta)=(0,1)$ and $\big(\frac{a}{p} \big)=-1$, \
			 $(\alpha, \beta)=(1,0)$ and $\big(\frac{b}{p} \big)=-1$, \\
      $(\alpha, \beta)=(1,1)$ and $\big(\frac{a}{p} \big)\big(\frac{b}{p} \big)=1$.
   \end{enumerate}
\end{proposition}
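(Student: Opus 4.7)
My plan is to translate each of Types 2, 3, 4 into the rational solvability of an explicit ternary quadratic form over $\Q$, and then to analyse local solvability at the prime $p$ via Hilbert symbols, showing that the listed conditions are exactly those forcing the relevant symbol to $-1$.

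First I would use the $\Q$-basis $\{1, \sqrt m\}$ of $\K$ to expand $U, V, W$ in each Type and substitute into the defining relations $UV = 2rn$ and $W^2 = U^2 + V^2 - (2s/r)UV$. For Type 2 ($U, V \in \Q$, $W = W'\sqrt m$), eliminating $V = 2rn/U$ and completing the square yields $m(UW')^2 = (U^2 - 2sn)^2 + 4n^2(r^2 - s^2)$; writing $r^2 - s^2 = A k_A^2$ this is the ternary form $mT^2 - S^2 - AY^2 = 0$. For Type 3 (write $U = a + b\sqrt m$, $V = a - b\sqrt m$, $W \in \Q$) the norm equation $a^2 - mb^2 = 2rn$ and the law of cosines combine to $rW^2 = 2(r-s)a^2 + 2m(r+s)b^2$; multiplying by $2r$ and using $2r(r-s) = Bk_B^2$, $2r(r+s) = Ck_C^2$ gives $(rW)^2 = B(k_B a)^2 + mC(k_C b)^2$, i.e.\ the form $Z^2 - BX^2 - mCY^2 = 0$. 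Type 4, with $V = -\sigma(U)$, yields the symmetrical form $Z^2 - CX^2 - mBY^2 = 0$ with the roles of $r - s$ and $r + s$ interchanged. Thus the existence of a triangle of Type 2 (resp.\ 3, 4) implies a non-trivial rational zero of a ternary form whose essential coefficients are $m$ and $A$ (resp.\ $B$, $C$).

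Next I would apply Hasse--Minkowski. Each form is isotropic over $\R$, and at odd primes outside its discriminant it is isotropic automatically, so obstructions can only arise at $p = 2$ or at odd primes dividing the pair of coefficients in question. At such a $p$, isotropy is governed by a Hilbert symbol: $(m, -A)_p$ for Type 2 and, after simplification, $(m, B)_p$ (resp.\ $(m, C)_p$) for Type 3 (resp.\ 4). Writing $m = p^\alpha a$ and the second quantity as $p^\beta b$ with $\gcd(p, ab) = 1$, the standard formula
\begin{equation*}
(p^\alpha a, p^\beta b)_p = (-1)^{\alpha\beta (p-1)/2}\left(\tfrac{a}{p}\right)^{\!\beta}\left(\tfrac{b}{p}\right)^{\!\alpha} \qquad (p \text{ odd})
\end{equation*}
and the analogous but finer mod-$8$ formula at $p = 2$ reduce the question to a short case analysis in $(\alpha, \beta)$ and in the residues of $a, b$. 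The cases tabulated in parts (1), (2), (3) are precisely those producing a Hilbert symbol equal to $-1$, hence a local obstruction and therefore no triangle of the asserted Type.

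The main obstacle will be the case $p = 2$: the Hilbert symbol there depends on the residues modulo $8$ of both units, which accounts for the many sub-cases of part (1) and requires careful bookkeeping. The odd-prime cases are cleaner but still split according to the sign $(-1)^{(p-1)/2}$, producing the separate lists for $p \equiv 1$ and $p \equiv 3 \pmod 4$. A secondary subtlety in Types 3 and 4 is that the literal Hilbert symbol of the form $Z^2 = BX^2 + mCY^2$ is $(B, mC)_p = (m, B)_p (B, C)_p$; the conditions in the Proposition correspond exactly to those primes $p$ at which the auxiliary factor $(B, C)_p$ is trivial, so that the obstruction collapses to $(m, B)_p$ and matches the statement. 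Once these bookkeeping points are in place, the verification is a routine, if tedious, tabulation.
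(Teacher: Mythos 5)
Your reduction of each Type to a ternary quadratic form is exactly the paper's: up to rescaling by squares you obtain the same three equations, namely $mZ^2=X^2+AY^2$ for Type 2 and $Z^2=BX^2+mCY^2$, $Z^2=CX^2+mBY^2$ for Types 3 and 4, and like the paper you then invoke Hasse--Minkowski and Hilbert symbols at $p$. Up to that point the proposal is sound and coincides with the paper's argument.

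The gap is the final step, which you defer as ``routine tabulation'' but which, as you have set things up, would not reproduce the stated lists. The local invariants you correctly identify are $(m,-A)_p$ for Type 2 and $(B,mC)_p$, $(C,mB)_p$ for Types 3 and 4, and none of these is a function of the pair $(m,A)$ (resp.\ $(m,B)$, $(m,C)$) alone. For Type 2 one has $(m,-A)_p=(m,-1)_p\,(m,A)_p$, and the extra factor $(m,-1)_p$ equals $-1$ precisely when $p\equiv 3\pmod 4$ and $\alpha=1$; in those sub-cases the conditions of part (3) encode $(m,A)_p=-1$, hence $(m,-A)_p=+1$, so the symbol you compute gives \emph{no} obstruction there and your table would contradict the statement. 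For Types 3 and 4 the mismatch is structural: $(B,mC)_p=(m,B)_p\,(B,C)_p$, and since $C\equiv AB$ modulo squares, $(B,C)_p=(B,-A)_p$, which depends on the residue of $A$ at $p$ --- data not recorded in the pair $(m,B)$ at all. Your proposed repair, that the Proposition's conditions ``correspond exactly to those primes at which $(B,C)_p$ is trivial,'' is asserted without proof and fails in general (e.g.\ an odd prime $p$ dividing $r-s$ but not $2r(r+s)$ gives $(B,C)_p=\big(\frac{C}{p}\big)$, which can be $-1$). So the heart of the Proposition --- the explicit congruence and Legendre-symbol lists --- is not actually established. You should be aware that the paper's own proof also stops short here: it states the $\Q_2$ and $p\equiv 1\pmod 4$ tables without computation and omits $p\equiv 3\pmod 4$ entirely, so the discrepancy your explicit symbol bookkeeping surfaces is a genuine defect of the argument, not merely of your write-up; but as it stands your proposal does not close it.
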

\par The next result settles a condition on $n$ and $mn$ to be $\ta$-congruent  over $\Q$.
\begin{theorem}
\label{th3}
Let  $n$ be a positive squarefree integer such that $\gcd(m,n)=1$ 
and $mn\neq 2,3,6$. Then the  following statements are equivalent.
\begin{enumerate}[\upshape (1)]
\item There is a $(\K,\ta,n)$-triangle $(U,V,W)_\ta$ with $0<U\leq V < W$, $W \not \in \Q$
  and $W\sqrt{m} \not \in \Q$;
\item The integers $n$ and $mn$ are $\ta$-congruent numbers over $\Q$.
\end{enumerate}
\end{theorem}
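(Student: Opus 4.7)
The plan is to prove the two implications separately. Direction $(1)\Rightarrow(2)$ goes via Theorem~\ref{th2} applied both to the given triangle and to its $\sqrt{m}$-scaled twin; direction $(2)\Rightarrow(1)$ builds the required triangle by adding points on $E_{n,\ta}(\K)$.

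For $(1)\Rightarrow(2)$, I would first observe that the hypothesis $W\notin\Q$ together with $W\sqrt{m}\notin\Q$ excludes the given $(\K,\ta,n)$-triangle from all four types of Theorem~\ref{th2}: Types~1 and~2 force $W\sqrt{m}\in\Q$, whereas Types~3 and~4 force $W\in\Q$. The contrapositive of Theorem~\ref{th2} thus yields that $n$ is $\ta$-congruent over $\Q$. To handle $mn$, I would pass to the twin triangle $(U\sqrt{m},V\sqrt{m},W\sqrt{m})$, which is a $(\K,\ta,mn)$-triangle (note that $mn$ is squarefree since $\gcd(m,n)=1$). Its hypotenuse $W'=W\sqrt{m}$ satisfies $W'\notin\Q$ by hypothesis and $W'\sqrt{m}=mW\notin\Q$ since $W\notin\Q$, so the same exclusion shows it avoids all four types; applying Theorem~\ref{th2} now with $mn$ in place of $n$ forces $mn$ to be $\ta$-congruent over $\Q$.

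For $(2)\Rightarrow(1)$: take a non-torsion $P_1\in E_{n,\ta}(\Q)$ coming from the rational $\ta$-triangle of area $n\alpha_\ta$, and a non-torsion $Q\in E_{mn,\ta}(\Q)$ coming from the rational $\ta$-triangle of area $mn\alpha_\ta$. Via the quadratic-twist isomorphism $E_{mn,\ta}\cong E_{n,\ta}^{m}$, which underlies the rank decomposition $\rank(E_{n,\ta}(\K))=\rank(E_{n,\ta}(\Q))+\rank(E_{mn,\ta}(\Q))$, the point $Q$ lifts to $P_2\in E_{n,\ta}(\K)$ with $\sigma(P_2)=-P_2$ modulo $2$-torsion. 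The sum $P=P_1+P_2\in E_{n,\ta}(\K)$ then satisfies $\sigma(P)=P_1-P_2$, which is distinct from both $P$ and $-P$ modulo torsion because $P_1$ and $P_2$ lie in opposite $\sigma$-eigenspaces and are each non-torsion. Translating $P$ back via the standard triangle--point correspondence produces a $(\K,\ta,n)$-triangle whose hypotenuse $W$ mixes nontrivial rational and $\sqrt{m}$-components, giving~(1).

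I expect the main obstacle to lie in $(2)\Rightarrow(1)$: one must ensure that the particular sum $P$ does not accidentally collide with $P_1$, $P_2$, or a $2$-torsion translate of one of these, for then the resulting triangle would collapse into one of the Types~1--4 of Theorem~\ref{th2} and force $W\in\Q$ or $W\sqrt{m}\in\Q$. Under the standing hypotheses $mn\neq 2,3,6$ and $m\neq\sqf(2r(r-s))$, the torsion subgroup $E_{n,\ta}(\K)_{\rm tors}$ is precisely $(\Z/2\Z)^2$, so only finitely many such coincidences can arise; each can be avoided by replacing $P_1$ or $P_2$ with a suitable odd multiple, and Proposition~\ref{L1} offers a complementary mechanism for ruling out the competing triangle types directly in many sub-cases.
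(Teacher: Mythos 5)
Your proposal is correct and follows essentially the same route as the paper: your $(1)\Rightarrow(2)$ is the contrapositive of Theorem~\ref{th2} applied both to the given triangle and to its $\sqrt{m}$-scaling (this subsumes the paper's Cases 2 and 3 in one sweep), and your $(2)\Rightarrow(1)$ is exactly the paper's Case 1, which adds the point of the rational $(\Q,\ta,n)$-triangle to the $\K$-point obtained from the rational $(\Q,\ta,mn)$-triangle divided by $\sqrt{m}$ and reads off that the resulting $W$ has nonzero rational and $\sqrt{m}$-components. One small remark: $m\neq\sqf(2r(r-s))$ is not a hypothesis of Theorem~\ref{th3}, but your argument does not actually need it, since the non-torsionness of $P_1$ and $P_2$ (available from Theorem~\ref{fuj1}(2) because $mn\neq 1,2,3,6$ and $m>1$) already gives $\sigma(2P)\neq\pm 2P$ and hence rules out the collisions you worry about.
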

\section{Preliminaries}
\label{sec1}
Consider an elliptic curve $E: y^2=x^3+ax^2+bx+c$  over $\Q$.
Recall that the $m$-twist $E^m$ of $E$ is an elliptic curve over $\Q$ defined by $y^2=x^3+amx^2+bm^2x+cm^3$. 
The next result establishes a fact about  ranks \cite{serf}.
\begin{theorem}\label{A}
Let  $E$ be an elliptic curve over $\Q$. Then
$${\rm rank}(E(\K))={\rm rank}(E(\Q))+{\rm rank}(E^m(\Q)).$$
\end{theorem}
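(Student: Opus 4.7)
The plan is to establish the rank formula by decomposing $E(\K)\otimes_{\Z}\Q$ according to the action of the nontrivial Galois automorphism $\sigma\in\Gal(\K/\Q)$ and identifying each eigenspace with the rational points of $E$ or of its quadratic twist $E^{m}$.

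First I would fix notation: let $\sigma$ denote the nontrivial element of $\Gal(\K/\Q)$, so $\sigma(\sqrt{m})=-\sqrt{m}$, and note that $\sigma$ acts on $E(\K)$ by acting on coordinates. Set $V=E(\K)\otimes_{\Z}\Q$, a finite-dimensional $\Q$-vector space of dimension $\rank(E(\K))$. Because $\sigma$ has order two and we have inverted $2$, $V$ decomposes canonically as a direct sum of eigenspaces $V=V^{+}\oplus V^{-}$, where $V^{\pm}=\{v\in V:\sigma v=\pm v\}$; concretely, $v=\tfrac{1}{2}(v+\sigma v)+\tfrac{1}{2}(v-\sigma v)$.

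Next I would identify $V^{+}$ with $E(\Q)\otimes\Q$. Clearly $E(\Q)\subseteq E(\K)^{\sigma}$, so the inclusion induces a map $E(\Q)\otimes\Q\to V^{+}$. Conversely, if $P\in E(\K)$ satisfies $\sigma P=P$, then both coordinates of $P$ lie in $\K^{\sigma}=\Q$, so $P\in E(\Q)$; hence $V^{+}=E(\Q)\otimes\Q$ and $\dim V^{+}=\rank(E(\Q))$.

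The main computational step is identifying $V^{-}$ with $E^{m}(\Q)\otimes\Q$. Suppose $P=(x,y)\in E(\K)$ satisfies $\sigma P=-P$. Since $-P=(x,-y)$, this forces $\sigma(x)=x$ and $\sigma(y)=-y$; thus $x\in\Q$ and $y=y_{0}\sqrt{m}$ for some $y_{0}\in\Q$. Substituting into $y^{2}=x^{3}+ax^{2}+bx+c$ and multiplying through by $m^{3}$ shows that $(X,Y):=(mx,m^{2}y_{0})$ is a $\Q$-rational point on $E^{m}:Y^{2}=X^{3}+amX^{2}+bm^{2}X+cm^{3}$. I would check that $P\mapsto(mx,m^{2}y_{0})$ is a group homomorphism $E(\K)^{\sigma=-1}\to E^{m}(\Q)$ (this is essentially the restriction to $\K$-points of the $\K$-isomorphism $E^{m}\xrightarrow{\sim}E$ sending $(X,Y)\mapsto(X/m,Y/(m\sqrt{m}))$, composed with the fact that $\sigma$ acts by $-1$ on the target under this identification). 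Going the other way, any $(X,Y)\in E^{m}(\Q)$ produces $(X/m,Y/(m\sqrt{m}))\in E(\K)$, which lies in the $\sigma=-1$ eigenspace. These maps are mutually inverse up to finite (torsion) kernel and cokernel, so after tensoring with $\Q$ they give $V^{-}\cong E^{m}(\Q)\otimes\Q$, hence $\dim V^{-}=\rank(E^{m}(\Q))$.

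Adding the two dimensions yields $\rank(E(\K))=\rank(E(\Q))+\rank(E^{m}(\Q))$. The only delicate point, and the one I would be most careful about, is ensuring the map $V^{-}\to E^{m}(\Q)\otimes\Q$ is a well-defined homomorphism with finite kernel and cokernel; this comes down to checking that the explicit $\K$-isomorphism between $E$ and $E^{m}$ intertwines the Galois action correctly, so that the $\sigma=-1$ eigenspace on the $E$-side corresponds precisely to the $\sigma=+1$ eigenspace (i.e., the $\Q$-rational points) on the $E^{m}$-side.
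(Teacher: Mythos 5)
Your argument is correct. Note that the paper itself gives no proof of this theorem: it is quoted from Serf's thesis \cite{serf}, so there is no in-paper argument to compare against. What you have written is the standard proof: decompose $E(\K)\otimes_{\Z}\Q$ into $\pm 1$-eigenspaces for the Galois involution (legitimate because $E$ is defined over $\Q$, so $\sigma$ acts as a group automorphism, and $2$ is invertible after tensoring with $\Q$), identify $V^{+}$ with $E(\Q)\otimes\Q$ via Galois descent, and identify $V^{-}$ with $E^{m}(\Q)\otimes\Q$ via the $\K$-isomorphism $(X,Y)\mapsto (X/m, Y/(m\sqrt{m}))$, which intertwines $\sigma$ on $E^{m}(\K)$ with $-\sigma$ on $E(\K)$. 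All the computations check out (e.g.\ $(mx,m^{2}y_{0})$ does land on $E^{m}$); in fact your two maps are exactly mutually inverse group isomorphisms between $E^{m}(\Q)$ and $\{P\in E(\K):\sigma P=-P\}$, so the hedge ``up to finite kernel and cokernel'' is unnecessary, though harmless. The only point worth making explicit is that $V^{+}$ (resp.\ $V^{-}$) is \emph{spanned} by the images of $E(\Q)$ (resp.\ of the $\sigma=-1$ subgroup), which follows from $v=\tfrac12(v+\sigma v)+\tfrac12(v-\sigma v)$ together with $P+\sigma P\in E(\Q)$ and $\sigma(P-\sigma P)=-(P-\sigma P)$; you state the decomposition, so this is implicit and the proof stands.
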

We denote the torsion subgroup of the  groups $E(\K)$ and $E^m(\K)$ by $T(E,\K)$ and $T(E^m,\K)$, respectively.
Also, we write  $T_{n,\ta}(\K)$ and  $T_{n,\ta}^m(\K)$, respectively, in the case $E=E_{n,\ta}$. 
The following proposition  and theorem have essential roles in the proof of our results.
\begin{proposition}[{\cite[Proposition 1]{kwon}}]
\label{B} 
Let $E$ be an elliptic curve over $\K$.
Then the map  $$\phi: T(E, \K)/T(E,\Q)\rightarrow T(E^m, \Q), \ \ \ \phi (\tilde{P}):= P-\sigma(P)$$
is an injective map of abelian groups, where $\sigma$ is the 
generator of $\Gal(\K/\Q)$.
\end{proposition}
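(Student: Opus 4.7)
The plan is to realize $\phi$ through the quadratic twist isomorphism. Writing $E\colon y^2=x^3+ax^2+bx+c$, the twist $E^m$ comes with a $\K$-isomorphism $\psi\colon E\to E^m$ given in coordinates by $(x,y)\mapsto(mx,m\sqrt{m}\,y)$, and the formula $\phi(\tilde P)=P-\sigma(P)$ in the statement should be read as $\tilde P\mapsto\psi(P-\sigma(P))$, with $\psi$ silently transporting the anti-invariant point from $E(\K)$ to $E^m(\K)$. A direct substitution using $\sigma(\sqrt{m})=-\sqrt{m}$ yields the key Galois identity
\begin{equation*}
\sigma(\psi(Q))=-\psi(\sigma(Q))\qquad\text{for every }Q\in E(\K),
\end{equation*}
and I would establish this at the outset as the single analytic ingredient.

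Granted this identity, the proof splits into three short verifications. First, for $P\in T(E,\K)$ the point $Q=P-\sigma(P)$ satisfies $\sigma(Q)=-Q$, and the identity gives $\sigma(\psi(Q))=-\psi(-Q)=\psi(Q)$, so $\psi(Q)\in E^m(\Q)$; since $T(E,\K)$ is $\sigma$-stable, $Q$ and hence $\psi(Q)$ is torsion, so $\phi(\tilde P)$ lands in $T(E^m,\Q)$. Second, if $P\in T(E,\Q)$ then $\sigma(P)=P$ and $\phi(P)=O$, so $\phi$ descends to the quotient $T(E,\K)/T(E,\Q)$; additivity is immediate since $\sigma$ is a group automorphism of $E(\K)$. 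Third, for injectivity suppose $\phi(\tilde P)=O$; then $P=\sigma(P)$ forces $P\in E(\Q)$, and since $P$ is torsion, $P\in T(E,\Q)$, whence $\tilde P=0$.

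The only genuine obstacle is the twist identity $\sigma\circ\psi=-\psi\circ\sigma$; everything downstream is formal group-theoretic bookkeeping. I would verify it by direct coordinate computation, since it depends essentially on the explicit shape of $\psi$ and on $\sigma(\sqrt{m})=-\sqrt{m}$. It is worth flagging that the injectivity conclusion is what makes Proposition~\ref{B} useful later on: it converts a quotient of possibly large torsion subgroups of $E(\K)$ into a concrete subgroup of the often much smaller $T(E^m,\Q)$, and this control over torsion is exactly what will be needed to rule out spurious non-$\Q$-rational torsion contributions when extracting $(\K,\ta,n)$-triangles from $\K$-rational points on $E_{n,\ta}$.
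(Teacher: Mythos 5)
The paper offers no proof of this proposition at all --- it is quoted verbatim from Kwon's paper (\cite[Proposition 1]{kwon}) --- so there is nothing internal to compare against; your argument stands or falls on its own, and it stands. The three verifications (image lands in $T(E^m,\Q)$, well-definedness on the quotient, injectivity) are all correct, and the injectivity step correctly reduces to $P=\sigma(P)\Rightarrow P\in E(\Q)$ via Galois descent. Your one substantive observation --- that the formula $P-\sigma(P)$ only makes sense as an element of $T(E^m,\Q)$ after transporting the anti-invariant point through the twist isomorphism $\psi(x,y)=(mx,m\sqrt{m}\,y)$, using the sign identity $\sigma\circ\psi=-\psi\circ\sigma$ --- is exactly the point the paper's statement glosses over (it also writes ``elliptic curve over $\K$'' where it needs $E$ defined over $\Q$ for $\sigma$ to act on $E(\K)$ and for $T(E,\Q)$ and $E^m$ to make sense), and your proof handles it properly. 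This is essentially the standard argument one finds in Kwon.
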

\begin{theorem}[{\cite[Theorem 4.2]{knap}}] 
\label{C}
Let $\F$ be an algebraic number field and  $E$  an elliptic curve 
over $\F$ defined by 
$$y^2=(x-\alpha_1)(x-\alpha_2)(x-\alpha_3), \  \alpha_1, \alpha_2, 
\alpha_3 \in \F.$$
Suppose that $(x_0,y_0)$ be an $\F$-rational point of $E$. Then, there exists an  
$\F$-rational point $(x_1, y_1)$  with
  $2(x_1, y_1)=(x_0,y_0)$ if and only if  $x_0-\alpha_1,$ $x_0-\alpha_2,$ $x_0-\alpha_3$ 
  are squares in $\F$. 
\end{theorem}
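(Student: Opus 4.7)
The plan is to prove both directions of the equivalence by an explicit analysis of the tangent-line construction for doubling on the Weierstrass model; throughout, write $f(x) = (x-\alpha_1)(x-\alpha_2)(x-\alpha_3)$, so $E:y^2=f(x)$. For the forward direction, assume $(x_0,y_0) = 2(x_1,y_1)$ with $(x_1,y_1)\in E(\F)$. The tangent to $E$ at $(x_1,y_1)$ has the form $L(x) = mx+c$ with $m,c\in\F$, and it meets $E$ again at $-2(x_1,y_1) = (x_0,-y_0)$, so the monic cubic $f(x) - L(x)^2$ factors as $(x-x_1)^2(x-x_0)$. Substituting $x=\alpha_i$ gives $-L(\alpha_i)^2 = (\alpha_i - x_1)^2(\alpha_i - x_0)$, which rearranges to
\[
x_0 - \alpha_i = \left(\frac{L(\alpha_i)}{x_1 - \alpha_i}\right)^2 \in (\F^*)^2
\]
(with the degenerate case $x_0=\alpha_i$ giving the square $0$ trivially).

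For the reverse direction, suppose $x_0 - \alpha_i = u_i^2$ with $u_i \in \F$. Then $(u_1u_2u_3)^2 = f(x_0) = y_0^2$, so after flipping the sign of one $u_i$ if necessary we may assume $u_1u_2u_3 = y_0$. I would then produce the half-point by the explicit formulas
\[
x_1 = x_0 + u_1u_2 + u_2u_3 + u_3u_1, \qquad y_1 = (u_1+u_2)(u_2+u_3)(u_3+u_1).
\]
The identity $x_1 - \alpha_i = (u_i+u_j)(u_i+u_k)$ (with $\{i,j,k\} = \{1,2,3\}$) is immediate from $u_i^2 = x_0 - \alpha_i$, and the product over $i$ yields $\prod_i(x_1-\alpha_i) = y_1^2$, confirming $(x_1,y_1) \in E(\F)$. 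A direct polynomial verification then shows
\[
f(x) - \bigl((u_1+u_2+u_3)(x-x_1) + y_1\bigr)^2 = (x-x_1)^2(x-x_0),
\]
which simultaneously establishes that the tangent slope at $(x_1,y_1)$ equals $u_1+u_2+u_3$ and that this tangent meets $E$ again at $(x_0,-y_0)$, i.e.\ $2(x_1,y_1) = (x_0,y_0)$.

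The main obstacle lies in guessing the correct closed form for $(x_1,y_1)$ in the reverse direction. The derivation is driven by the three conditions $L(\alpha_i) = \pm u_i(x_1 - \alpha_i)$ from the forward-direction factorization, together with $L(x_0) = -y_0$; matching signs consistently and fixing the normalization $u_1u_2u_3 = y_0$ is what forces the symmetric value $m = u_1+u_2+u_3$ and hence determines $x_1$. Once the candidate is in hand, the remaining work is essentially symmetric-function bookkeeping in $u_1, u_2, u_3$, and the mildly exceptional two-torsion case ($x_0 = \alpha_i$, i.e.\ some $u_i = 0$) is absorbed by the same formulas.
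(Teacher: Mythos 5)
The paper offers no proof of this statement: it is quoted verbatim from Knapp \cite[Theorem~4.2]{knap}, so there is nothing internal to compare against. Judged on its own, your argument is correct and is essentially the standard tangent-line proof of the division-by-two criterion (the same style of argument as in Knapp). The forward direction is clean: $f(x)-L(x)^2=(x-x_1)^2(x-x_0)$ evaluated at $x=\alpha_i$ gives $x_0-\alpha_i$ as an explicit square, and the only thing worth saying aloud is that $x_1\neq\alpha_i$ and the tangent is non-vertical because otherwise $2(x_1,y_1)=\infty$. In the reverse direction your candidate point checks out: $x_1-\alpha_i=(u_i+u_j)(u_i+u_k)$, the distinctness of the $\alpha_i$ forces $u_i+u_j\neq 0$ and hence $y_1\neq 0$ (so $(x_1,y_1)$ is not $2$-torsion), the claimed cubic identity holds because both sides are monic cubics agreeing at the three distinct $\alpha_i$, and the sign comes out right since $L(x_0)=(u_1+u_2)(u_2+u_3)(u_3+u_1)-(u_1+u_2+u_3)(u_1u_2+u_2u_3+u_3u_1)=-u_1u_2u_3=-y_0$, whence $2(x_1,y_1)=(x_0,y_0)$. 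I would only ask you to state explicitly the two facts you are leaning on implicitly, namely that the $\alpha_i$ are pairwise distinct (nonzero discriminant) and that two monic cubics agreeing at three distinct points coincide; with those spelled out the proof is complete.
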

\par The next results give important information about $\ta$-congruent numbers over $\Q$. 
\begin{theorem} 
{\rm (Fujiwara, \cite{fujw1})}
\label{fuj1}
Consider $0< \ta< \pi$ with rational cosine.
\begin{enumerate}[\upshape (1)]
\item A positive integer $n$ is a $\ta$-congruent number if and 
only if $E_{n,\ta}(\Q)$ has a point of order greater than $2$;
\item  If $n\neq 1,2,3,6$, then $n$ is a $\ta$-congruent 
number if and only if $E_{n,\ta}(\Q)$ has positive rank.
\end{enumerate}
\end {theorem}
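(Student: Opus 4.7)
The plan is to prove both parts via an explicit bijection between $\ta$-congruent triangles $(U,V,W)_\ta$ with sides in $\Q$ and area $n\alpha_\ta$, and $\Q$-rational points $(x,y)$ on $E_{n,\ta}$ with $y \neq 0$ (equivalently, points of order greater than $2$). Note that the three non-identity two-torsion points of $E_{n,\ta}$ are $T_1=(0,0)$, $T_2=(-(r+s)n,0)$ and $T_3=((r-s)n,0)$, so ``point of order greater than $2$'' is synonymous with ``rational point with $y\neq 0$''.

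For Part (1), the forward direction starts from the defining relations $UV=2rn$ and $W^2=U^2+V^2-\tfrac{2s}{r}UV$ of a $\ta$-congruent triangle, which rearrange into
$$(U+V)^2 - W^2 = 4n(r+s), \qquad W^2 - (V-U)^2 = 4n(r-s).$$
This pair of factorizations as differences of squares motivates an explicit rational substitution $\Phi(U,V,W)=(x,y)$ whose coordinates are rational functions of $U,V,W$ and the parameters $n,r,s$; a direct computation then verifies that $\Phi(U,V,W)\in E_{n,\ta}(\Q)$ and that $y\neq 0$ because the area of the triangle is nonzero. For the converse direction, given $(x_0,y_0)\in E_{n,\ta}(\Q)$ with $y_0\neq 0$, I would invert $\Phi$, expressing $U,V,W$ as rational functions of $x_0,y_0$, and check that the triple satisfies both the area condition $UV=2rn$ and the law of cosines for angle $\ta$.

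For Part (2), one implication is immediate: if $\rank(E_{n,\ta}(\Q))>0$, then $E_{n,\ta}(\Q)$ contains points of infinite order, hence points of order greater than $2$, so Part (1) yields that $n$ is $\ta$-congruent. The converse reduces, via Part (1), to showing that whenever $n\notin\{1,2,3,6\}$ the torsion subgroup of $E_{n,\ta}(\Q)$ equals exactly the full two-torsion $\{O,T_1,T_2,T_3\}\cong(\Z/2\Z)^2$. By Mazur's theorem, any torsion group containing $(\Z/2\Z)^2$ is of the form $\Z/2\Z\times\Z/2k\Z$ with $k\in\{1,2,3,4\}$, so I must rule out $k=2,3,4$. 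For $k=2$ or $4$, one of the $T_i$ must be doubly-divisible in $E_{n,\ta}(\Q)$; Theorem \ref{C} forces all three differences $x(T_i)-\alpha_j$ to be rational squares. Since $|s|<r$ gives $r\pm s>0$, the differences attached to $T_1$ and $T_2$ include a negative rational, so only $T_3$ can be doubled, and this imposes the simultaneous conditions that $(r-s)n$ and $2rn$ be rational squares. A parallel analysis via the $3$-division polynomial handles $k=3$. One then checks that for squarefree $n$ these Diophantine conditions admit solutions only when $n\in\{1,2,3,6\}$.

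The main obstacle is the detailed case analysis in Part (2): translating the divisibility conditions of Theorem \ref{C} (and the analogous $3$-torsion conditions) into explicit Diophantine restrictions on $n$, $r$, $s$, and verifying that the complete list of exceptional squarefree $n$ is precisely $\{1,2,3,6\}$ uniformly in $(r,s)$. The algebra underlying Part (1) is a lengthy but essentially routine verification; the only delicacy there is guessing a substitution $\Phi$ that is simultaneously invertible and yields the Weierstrass form of $E_{n,\ta}$.
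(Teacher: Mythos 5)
First, note that the paper itself gives no proof of this statement: Theorem \ref{fuj1} is imported verbatim from Fujiwara \cite{fujw1}, so there is no in-paper argument to compare against. The closest in-paper ingredient is the torsion classification quoted as Theorem \ref{fuj2} (also from Fujiwara, without proof), which is exactly what your Part (2) would need. Your overall strategy is the standard one and is sound in outline: the triangle-to-point dictionary for Part (1) does exist (indeed the explicit formulas $U=2rnx/|y|$, $V=(x^2+2snx-(r^2-s^2)n^2)/|y|$, $W=(x^2+(r^2-s^2)n^2)/|y|$ appear in this paper's proof of Theorem \ref{th2}), and your $2$-power torsion analysis is correct: only $T_3=((r-s)n,0)$ can be halved, forcing $(r-s)n$ and $2rn$ to be squares, whence $n=\sqf(r-s)=\sqf(2r)$ divides $\gcd(2r,r-s)\mid 2$ and $n\in\{1,2\}$.

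The genuine gap is the order-$3$ case in Part (2). Ruling out $\Z/2\Z\oplus\Z/6\Z$ for squarefree $n\notin\{1,2,3,6\}$ is not a "parallel analysis" to the $2$-torsion case and is not a routine Diophantine check: a rational $3$-torsion point gives a rational root of the quartic $3x^4+8snx^3-6(r^2-s^2)n^2x^2-(r^2-s^2)^2n^4$, and extracting from this (uniformly in $r,s$) that $n\in\{1,2,3,6\}$ is precisely the content of Theorem \ref{fuj2}(2), which Fujiwara obtains via a genus-zero parametrization of curves with $\Z/2\Z\oplus\Z/6\Z$ torsion. Your proposal asserts the conclusion without supplying this argument, and since it is the hardest and only non-formal step of Part (2), the proof is incomplete there. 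Two smaller points on Part (1): the correspondence is not literally a bijection (several points map to one triangle, and the map $\varphi(U,V,W)=(W^2/4,W(V^2-U^2)/8)$ used elsewhere in the paper sends isosceles triangles to $2$-torsion points, so "$y\neq 0$ because the area is nonzero" depends on choosing the right $\Phi$); and in the converse direction you must also arrange positivity of $U,V,W$, which requires translating the given point by a $2$-torsion point so that its $x$-coordinate lies on the unbounded real component — the paper does this explicitly in the proof of Theorem \ref{th2}, and your sketch omits it.
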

All possibilities for  the torsion subgroup of  $E_{n,\ta}(\Q)$ can be found in the next result. 
\begin{theorem} {\rm (Fujiwara, \cite{fujw2})}
\label{fuj2}
Let $T_{n,\ta}(\Q)$ be the torsion subgroup of the $\ta$-congruent 
number elliptic curve $E_{n,\ta}$ over $\Q$. 
\begin{enumerate}[\upshape (1)]
	\item $T_{n,\ta}(\Q)\cong \Z_2\oplus \Z_8$ if and only if there exist 
	integers $a,b>0$ such that $\gcd(a,b)=1$, $a$ and $b$ have opposite parity  
	and satisfy either of the following conditions.
	\begin{enumerate}[\upshape (i) ] 
  	\item $n=1$, $r=8a^4b^4$, $r-s=(a-b)^4$, $(1+\sqrt{2})b>a>b,$
	  \item $n=2$, $r=(a^2-b^2)^4$, $r-s=32a^4b^4$, $a>(1+\sqrt{2})b;$
	\end{enumerate}
	\item $T_{n,\ta}(\Q)\cong \Z_2\oplus \Z_6$ if and only if there exist 
	integers $u,v>0$ such that $\gcd(u,v)=1$, $u>2v$ and satisfy one of 
	the following conditions:
	\begin{enumerate}[\upshape (i) ] 
  	\item $n=1$, $r=\frac{1}{2}(u-v)^3(u+v)$, $r+s=u^3(u-2v),$
	  \item $n=2$, $r=(u-v)^3(u+v)$, $r+s=2u^3(u-2v),$
	  \item $n=3$, $r=\frac{1}{6}(u-v)^3(u+v)$, $r+s=\frac{1}{3}u^3(u-2v),$
	  \item $n=6$, $r=\frac{1}{3}(u-v)^3(u+v)$, $r+s=\frac{2}{3}u^3(u-2v);$
	\end{enumerate}
  \item $T_{n,\ta}(\Q)\cong \Z_2\oplus \Z_4$ if and only if either of the following holds. 
  \begin{enumerate}[\upshape (i) ] 
  	\item $n=1$, $2r$ and $r-s$ are squares but not satisfy {\rm (i)} of Part {\rm (1)},
	  \item $n=2$, $r$ and $2(r-s)$ are squares but not satisfy  {\rm (ii)} of Part {\rm (1)};
	\end{enumerate}
	\item Otherwise, $T_{n,\ta}(\Q)\cong \Z_2\oplus \Z_2$.
\end{enumerate}
\end {theorem}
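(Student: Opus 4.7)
The plan is to use Mazur's theorem to reduce the classification to finitely many possibilities and then to apply Theorem \ref{C} and the third division polynomial to pin down arithmetic conditions on $r$, $s$, $n$ in each case. Since the cubic defining $E_{n,\ta}$ splits over $\Q$, one always has $\Z_2\oplus\Z_2 \subseteq T_{n,\ta}(\Q)$; by Mazur's theorem it follows that $T_{n,\ta}(\Q)\cong \Z_2\oplus\Z_{2k}$ for some $k\in\{1,2,3,4\}$. So the classification reduces to deciding when $E_{n,\ta}(\Q)$ contains a point of order $4$, of order $8$, and of order $3$.

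For orders $4$ and $8$ (parts (1) and (3)) I apply Theorem \ref{C} to the three $2$-torsion points $T_0=(0,0)$, $T_+=(-(r+s)n,0)$, $T_-=((r-s)n,0)$. For $T_0$ (resp.\ $T_+$) the halving criterion requires $-(r-s)n$ (resp.\ $-2rn$) to be a rational square, which is impossible for $n>0$ and $|s|<r$. The remaining point $T_-$ is halvable iff $(r-s)n$ and $2rn$ are both rational squares. Since $n$ is squarefree and $\gcd(r,s)=1$, a short divisibility argument (the squarefree part of $n$ must divide both $r-s$ and $2r$, hence $\gcd(r,s)$ up to a factor of $2$) collapses this to $n\in\{1,2\}$ with $r-s,2r$ squares (resp.\ $2(r-s),r$ squares), giving exactly part (3). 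Next, applying Theorem \ref{C} once more to the order-$4$ point $P$ with $2P=T_-$ demands that $x(P)$, $x(P)+(r+s)n$, and $x(P)-(r-s)n$ all be rational squares. Using the doubling formula to express $x(P)$ in terms of the square roots produced by part (3), I obtain a system whose rational solutions are parametrized by a coprime pair $(a,b)$ of opposite parity; unwinding, one recovers $r=8a^4b^4$, $r-s=(a-b)^4$ with $(1+\sqrt{2})b>a>b$ for $n=1$, and $r=(a^2-b^2)^4$, $r-s=32a^4b^4$ with $a>(1+\sqrt{2})b$ for $n=2$, the inequalities coming from $0<|s|<r$. This gives part (1).

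For order $3$ (part (2)), a rational point of order $3$ on $E_{n,\ta}$ has $x$-coordinate at a rational root of the third division polynomial $\psi_3(x)\in \Z[r,s,n][x]$, a degree-$4$ polynomial, and yields an honest $\Q$-rational point iff the corresponding value of $f(x)=x(x+(r+s)n)(x-(r-s)n)$ is a rational square. I parametrize rational roots of $\psi_3$ by coprime integers $u>2v>0$; imposing the squareness of $f$ at such a root, combined with $\gcd(r,s)=1$ and $n$ squarefree, forces $n\in\{1,2,3,6\}$ with $r$ and $r+s$ given by the explicit polynomial expressions in $u,v$ listed in (2)(i)--(iv). Since a torsion subgroup of an elliptic curve over $\Q$ cannot simultaneously contain $\Z_3$ and $\Z_8$ as direct summands, cases (1) and (2) are mutually exclusive, and part (4) is then the logical complement of (1)--(3).

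The main obstacle is the explicit two-parameter descent yielding part (1) and the rational root analysis of $\psi_3$ yielding part (2): in each case the underlying auxiliary locus is rational (genus $0$), but extracting the precise parametrization while respecting $\gcd(r,s)=1$ and squarefreeness of $n$, normalising the scaling so as to recover the exact scalar coefficients, and checking the inequalities on $(a,b)$ and $(u,v)$ that enforce $0<|s|<r$, requires careful arithmetic bookkeeping.
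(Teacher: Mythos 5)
This statement is quoted from Fujiwara \cite{fujw2}; the paper gives no proof of it, so there is nothing internal to compare your argument against, and your proposal has to stand on its own. Your scaffolding is the standard (and surely Fujiwara's) route: full rational $2$-torsion plus Mazur forces $T_{n,\ta}(\Q)\cong \Z_2\oplus\Z_{2k}$ with $k\le 4$, and the problem reduces to deciding $4$-, $8$- and $3$-divisibility. The pieces you actually carry out are correct: Theorem \ref{C} rules out halving $(0,0)$ and $(-(r+s)n,0)$ because $-(r-s)n<0$ and $-(r+s)n<0$; halving $((r-s)n,0)$ needs $(r-s)n$ and $2rn$ to be squares, and with $n$ squarefree and $\gcd(r,s)=1$ one gets $\gcd(2r,r-s)\mid 2$, hence $n\in\{1,2\}$ and exactly the conditions of part (3); part (4) is the complement, and Mazur excludes the coexistence of $3$- and $4$-torsion (so (2) and (3) cannot overlap) as well as of $3$- and $8$-torsion.

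The genuine gap is that parts (1) and (2) --- which are the actual content of the theorem --- are not proved but only promised. For (1) you would have to apply Theorem \ref{C} to each of the four order-$4$ points lying over $((r-s)n,0)$ (their $x$-coordinates are $(r-s)n\pm n\sqrt{2r(r-s)}$ shifted by the $2$-torsion translates, not a single point $P$), turn the resulting square conditions into a system of Pythagorean-type equations, and then run the descent that produces the coprime, opposite-parity pair $(a,b)$ together with the normalization giving exactly $r=8a^4b^4$, $r-s=(a-b)^4$ and the inequality $(1+\sqrt2)b>a>b$ (which encodes $0<|s|<r$). For (2), parametrizing the rational roots of the degree-$4$ division polynomial $\psi_3=3x^4+8snx^3-6(r^2-s^2)n^2x^2-(r^2-s^2)^2n^4$ \emph{and} simultaneously forcing $f(x_0)$ to be a square is a nontrivial two-step descent; that the relevant locus is rational (it is essentially $X_1(2,6)$, resp.\ $X_1(2,8)$ for part (1)) guarantees a parametrization exists but does not produce the specific formulas $r=\frac12(u-v)^3(u+v)$, $r+s=u^3(u-2v)$, nor the restriction $n\in\{1,2,3,6\}$ with the stated rescalings. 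Since you explicitly defer exactly these computations, what you have is a correct strategy with the decisive steps unexecuted, not yet a proof.
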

\begin{remark}
\label{rem1}
For any squarefree integer $m>1$,  the $m$-twist $E_{n,\ta}^m$  of the 
elliptic curve $E_{n,\ta}$ is defined by 
$y^2=x(x+(r+s)mn)(x-(r-s)mn)$ which is  equal to $E_{mn,\ta}$, as seen. 
Therefore $E_{n,\ta}^m(\Q)=E_{mn,\ta}(\Q)$, and hence $T_{n,\ta}^m(\Q)=T_{mn,\ta}(\Q)$.
\end{remark}
\section{Proofs}
\label{sec2}
Appealing to Proposition \ref{B}, we first settle all possibilities for  the torsion subgroup of $E_{n,\ta}(\K)$.
Let   $h, k,  $ and $ d $ be integers such that  $2r=h^2\sqf(2r), r-s=k^2\sqf(r-s)$ and 
 $2r(r-s)=d^2m$, where $m=\sqf(2r(r-s))$.
%
\begin{proposition}\label{p1}
 Assume that $m>1$ and  $n$ are  squarefree positive integers such that $\gcd(m,n)=1$ 
 and $mn\neq  2, 3,6$. Let $T_{n,\ta}(\K)$ be the torsion  subgroup of  $E_{n,\ta}(\K)$. 
\begin{enumerate}[\upshape (1)]
\item If $m=\sqf(2r(r-s))$ and $n=\sqf(2r)$, then
 \begin{multline*} T_{n,\ta}(\K)  =   \{ \infty, (0,0), ( -(r+s)n,0 ) , ( (r-s)n,0 ) , \\
                     ( (nh)^2 - nd\sqrt{m}, \pm ( \frac{d^2mn}{h} - n^2hd\sqrt{m}) ) , 
                     ( (nh)^2 + nd\sqrt{m}, \pm ( \frac{d^2mn}{h} + n^2hd\sqrt{m})  ) \}; 
\end{multline*}
\item If $m=\sqf(2r(r-s))$ and $n=\sqf(r-s)$, then
 \begin{multline*} T_{n,\ta}(\K)  =   \{ \infty, (0,0),(-(r+s)n,0 ), ((r-s)n,0 ), \\
                    ( (nk)^2 - nd\sqrt{m}, \pm (\frac{d^2mn}{k} - n^2kd\sqrt{m}) ) ,
                     ( (nk)^2 + nd\sqrt{m}, \pm (\frac{d^2mn}{k} + n^2kd\sqrt{m})  )   \}; 
\end{multline*}										
\item Otherwise, $ T_{n,\ta}(\K)=\{ \infty, (0,0),(-(r+s)n,0 ), ((r-s)n,0 )\}$.
\end{enumerate}
\end{proposition}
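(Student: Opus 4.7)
My plan is to pin down $T_{n,\ta}(\K)$ by combining three tools: the rational $2$-torsion already present in $E_{n,\ta}(\Q)$, Kwon's injection (Proposition \ref{B}) coupled with Fujiwara's classification (Theorem \ref{fuj2}) applied to the twist $E^m_{n,\ta}=E_{mn,\ta}$ (Remark \ref{rem1}), and Knapp's halving criterion (Theorem \ref{C}) to produce any genuinely new $4$-torsion over $\K$.

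Since the cubic $x(x+(r+s)n)(x-(r-s)n)$ splits over $\Q$, the four points $\infty$, $(0,0)$, $(-(r+s)n,0)$, $((r-s)n,0)$ sit in $T_{n,\ta}(\K)$ in every case; these are exactly the points listed in case (3) and the first four in cases (1)--(2). Combining Proposition \ref{B} with Remark \ref{rem1} gives an injection $T_{n,\ta}(\K)/T_{n,\ta}(\Q)\hookrightarrow T_{mn,\ta}(\Q)$, and under $m>1$ together with $mn\notin\{2,3,6\}$ Theorem \ref{fuj2} forces $T_{mn,\ta}(\Q)\cong\Z_2\oplus\Z_2$, because each of its parts (1)--(3) pushes the parameter into $\{1,2,3,6\}$. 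Hence $T_{n,\ta}(\K)/T_{n,\ta}(\Q)$ is elementary abelian of order at most $4$, ruling out odd torsion and any new $8$-torsion. Theorem \ref{C} then says a $2$-torsion point $(\alpha_i,0)$ has a $\K$-rational half iff all three $\alpha_i-\alpha_j$ lie in $\K^{*2}$; with roots $0$, $-(r+s)n$, $(r-s)n$ and $\K\subset\R$, positivity rules out halving $(0,0)$ or $(-(r+s)n,0)$, leaving only $((r-s)n,0)$ and forcing $(r-s)n,\,2rn\in\K^{*2}$.

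I translate these into squarefree arithmetic via $a\in\K^{*2}\iff\sqf(a)\in\{1,m\}$ and $\sqf(2rn\cdot(r-s)n)=\sqf(2r(r-s))$: a case analysis on $(\sqf(2rn),\sqf((r-s)n))\in\{1,m\}^2$ shows that both factors are squares in $\K$ precisely when $m=\sqf(2r(r-s))$ and exactly one of $2rn$, $(r-s)n$ is a rational square. This gives case (1) via $n=\sqf(2r)$ and case (2) via $n=\sqf(r-s)$. In case (1) one has $\sqrt{2rn}=hn$, while $2r(r-s)=d^2m$ combined with $\gcd(m,n)=1$ forces $h\mid d$, so $\sqrt{(r-s)n}=\pm d\sqrt{m}/h$; substituting into the standard halving formula for $((r-s)n,0)$ yields $x_1=(nh)^2\pm nd\sqrt{m}$ with the stated $y_1$-coordinates. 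Case (2) is symmetric after replacing $h$ by $k$. In case (3) at least one of the two square conditions fails, so no new $4$-torsion lies over $((r-s)n,0)$ and, combined with the Kwon bound, $T_{n,\ta}(\K)$ reduces to the $2$-torsion subgroup listed.

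The main obstacle I anticipate is the square-class bookkeeping in the middle step: one must meticulously track which of the four pairs in $\{1,m\}^2$ are realized by $(\sqf(2rn),\sqf((r-s)n))$ and eliminate the borderline scenario $\sqf(2r(r-s))=1$ using the hypothesis $mn\notin\{2,3,6\}$ through Fujiwara. The explicit coordinate verification in cases (1)--(2) is routine once the identities $nh^2=2r$ and $d^2m=2r(r-s)$ are used in tandem, but matching all four sign combinations in the $y$-coordinates to the proposition's list exactly requires care.
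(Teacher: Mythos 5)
Your proposal follows essentially the same route as the paper's proof: the four rational $2$-torsion points, Kwon's injection (Proposition \ref{B}) combined with Fujiwara's classification applied to $T^m_{n,\ta}(\Q)=T_{mn,\ta}(\Q)$ to bound the torsion, Knapp's halving criterion to show that only $((r-s)n,0)$ can be divided by $2$ (positivity in $\K\subset\R$ ruling out the other two), and the duplication formula for the explicit coordinates in cases (1) and (2). If anything, your square-class case analysis on $\bigl(\sqf(2rn),\sqf((r-s)n)\bigr)$ is spelled out more carefully than the paper's one-line assertion at that step.
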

\begin{proof}
The   $2$-torsion subgroup of $E_{n,\ta}(\K)$ is:
 $$E_{n,\ta}[2](\K)= \{\infty, (0,0), (-(r+s)n,0), ((r-s)n,0) \}.$$ 
Therefore, we have 
$T_{n,\ta}(\K) \supset E_{n,\ta}[2](\K) \cong \Z/2\Z \oplus \Z/2\Z.$
By Remark \ref{rem1} and Theorem \ref{fuj2}, 
$T_{n,\ta}^m(\Q)=T_{mn,\ta}(\Q)\cong \Z/2\Z \oplus \Z/2\Z$.  
Since $T_{n,\ta}(\Q)\cong \Z/2\Z \oplus \Z/2\Z$,  by Proposition 
\ref{B} and {\cite[Theorem 1]{kwon}} we have
$$T_{n,\ta}(\K)\cong  \Z/2\Z \oplus \Z/2\Z \ {\rm or} \ \Z/2\Z \oplus \Z/4\Z. $$
First let $T_{n,\ta}(\K)\cong \Z/2\Z \oplus \Z/4\Z$. Then there exists 
a point $P=(x_0,y_0)$ of order $4$ in $T_{n,\ta}(\K)$. Then $2P$ must be one of 
the points $(0,0)$, $(-(r+s)n,0)$ and $((r-s)n,0)$. If $2P=(0,0)$ then both $(r+s)n$ and $-(r-s)n$ are squares in $\K$, which is impossible since $\K$  is a real quadratic number field and hence $-1$ is not a square in $ \K $. Similarly, if $2P=(-(r+s)n,0)$, then $-(r+s)n$ and $-2rn$ are squares in $\K$, again a contradiction by the same reason. 
If  $2P=((r-s)n,0)$, then $(r-s)n$ and $2rn$ are squares in $\K$.
Since $n$ is  squarefree, these integers are squares in $\K$ if $m=\sqf(2r(r-s))$.
By a simple computation using the duplication formula  we obtain (1) and (2).
Now,  $T_{n,\ta}(\K)\cong  \Z/2\Z \oplus \Z/2\Z$ implies (3), and the proof is completed.
 \end {proof}
\begin{proof}[\bf{Proof of Theorem \ref{th1}}]
Consider the two sets 
 $$S= \big\{(U,V,W) \in (\K^*)^3: \ 0< U \leq V <W, \ UV=2rn \text{ and } U^2+V^2 -2sUV/r=W^2\big\},$$ 
$$T=\{(u,v)\in 2E_{n,\ta}(\K)\backslash \{\infty\}: \ v\geq0\}.$$
There is a one to one correspondence between the two sets $S$ and $T$ via the two
mutually inverse maps $\varphi : S \rightarrow T$ and $\psi : T \rightarrow S $  defined by
$$\varphi(U,V,W):=( {W^2}/{4},{W(V^2-U^2)}/{8}),$$ 
 $$\psi (u,v):=\big(\sqrt{u+(r+s)n} - \sqrt{u-(r-s)n},
 \sqrt{u+(r+s)n} + \sqrt{u-(r-s)n},2\sqrt{u} \big).$$
 Clearly,  $E_{n,\ta}(\K)\backslash E_{n,\ta}[2](\K) \neq \emptyset$ if
  and only if $S\neq \emptyset$.
 \par Suppose that $m\neq \sqf(2r(r-s))$ and $mn\neq  2, 3,6$. Then by   proposition \ref{p1}, we have  
 $ T_{n,\ta}(\K)= E_{n,\ta}[2](\K)$.
 Therefore, $\rank(E_{n,\ta}(\K))>0$  if and only if 
 $E_{n,\ta}(\K)\backslash E_{n,\ta}[2](\K) \neq \emptyset$.
 So $\rank(E_{n,\ta}(\K))>0$ if and only if 
 either $\rank(E_{n,\ta}(\Q))>0$ or  
 $\rank(E_{n,\ta}^m(\Q))>0$, by Theorem \ref{A}.
 The second part of the theorem follows from 
Remark \ref{rem1}.
\end{proof}
\begin{proof}[\bf{Proof of Theorem \ref{th2}}]
Assume $n$ is a $(\K,\ta)$-congruent 
number and $(U,V,W)_\ta$ is the corresponding  $(\K,\ta,n)$-triangle 
with area $n\alpha_\ta$ such that $0< U\leq V <W$. As  
in the proof of the Theorem \ref{th1}, there is a point $P=(x,y)$ in 
$E_{n,\ta}(\K)\backslash E_{n,\ta}[2](\K)$ such that $\psi(P)=(U,V,W)$.
Substituting $P$ by $P+(0,0)$, $P+(-(r+s)n,0)$ or $P+((r-s)n,0)$, 
if necessary, we may assume that $x>[(r+s)+\sqrt{2r(r-s)}]n$.
 Putting $2P=(u,v)$ and using the map $\psi$ in the proof of Theorem \ref{th1}, we obtain
$$U={2rnx}/{|y|}, \ V={x^2+2snx-(r^2-s^2)n^2}/{|y|}, \
 W={x^2+(r^2-s^2)n^2}/{|y|},$$
 where $x,y \in \K$ and  $|\cdot |$ is the usual absolute value induced from the embedding 
 $\iota : \K \hookrightarrow \R$ with $\iota (\sqrt{m})$  positive. 
Suppose $\sigma$ is a generator of $\Gal(\K/\Q)$ and put 
$\sigma(P)=(\sigma(x),\sigma(y))$. 
Since $P+\sigma(P)$ is an element of $E_{n,\ta}(\Q)$ and $n$ is 
not  a $\ta$-congruent number, 
$P+\sigma(P)\in T_{n,\ta}(\Q)=\{ \infty, (0,0), (-(r+s)n,0), ((r-s)n,0)\}.$
Hence, one of the following cases necessarily happens:
\begin{enumerate}[\upshape I.] 
\item \underline{$P+\sigma(P)=\infty$}. 
In this case, $\sigma(x)=x$ and $\sigma(y)=-y$. So, $x, y\sqrt{m}$  and hence
$U\sqrt{m}, V\sqrt{m}$ and $W\sqrt{m}$ are rational 
 and  we obtain a $(\K,\ta,n)$-triangle of Type 1.
\item \underline{$P+\sigma(P)=(0,0)$}. We have
$\sigma(x)/x=\sigma(y)/y$, which we denote  by $\alpha$. Then,
$$\sigma(y)^2=\alpha^2y^2 =\alpha^2x^3+2sn\alpha^2x^2-(r^2-s^2)n^2\alpha^2x.$$ 
Since $\sigma(P)$ is a point on $E_{n,\ta}$, we get
$$\sigma(y)^2= \sigma(x)^3+2sn\sigma(x)^2-(r^2-s^2)n^2\sigma(x)
=\alpha^3x^3+2sn\alpha^2x^2-(r^2-s^2)n^2\alpha x.$$
Clearly, $\alpha\neq 0, 1$ and $x\neq 0$, which implies 
$x\sigma(x)=\alpha x^2=-(r^2-s^2)n^2.$
Therefore,
 $$V={x(x+2sn+\sigma(x))}/{|y|},\ W\sqrt{m}={x(x-\sigma(x))\sqrt{m}}/{|y|}.$$
Since $x/y=\sigma(x/y)$ and $x>[(r+s)+\sqrt{2r(r-s)}]n$, then $x/|y|$ is rational and
hence $U=2rnx/|y|$, $V$ and $W\sqrt{m}$ are rational, which gives
 a $(\K,\ta,n)$-triangle  of Type 2.
\item \underline{$P+\sigma(P)=((r-s)n,0)$}. We have
$\sigma(x-(r-s)n)/(x-(r-s)n)=\sigma(y)/y,$
which we denote  by $\beta$. Put $z=x-(r-s)n$. Then, 
$$\sigma(y)^2=\beta^2[z^3+(3r-s)nz^2+2r(r-s)n^2z].$$
Since $\sigma(P)$ is a point on $E_{n,\ta}$, we get 
$$\sigma(y)^2=\beta^3z^3+(3r-s)n\beta^2z^2+2r(r-s)n^2\beta z.$$
Now $\beta \neq 0, 1$ and $z\neq 0$, which implies
$\beta z^2=2r(r-s)n^2.$
Substituting this equation and $x=z+(r-s)n$ in $U$, $V$ and $W$, we obtain 
$$U= \frac{z(\sigma(z)+2rn)}{|y|}, \ V=\frac{z(z+2rn)}{|y|}, \
 W=\frac{z(z+2(r-s)n+ \sigma(z))}{|y|}.$$
Since $z/y=\sigma(z/y)$ and $z>0$, then $z/|y|$ and hence $W$ is rational and $\sigma(U)=V$. This time we obtain 
a $(\K,\ta,n)$-triangle of Type 3.
\item \underline{$P+\sigma(P)=(-(r+s)n,0)$}. Put $w=x+(r+s)n$. As in  Case
III,  $w/|y|$ and 
$$ W={w(w-2(r+s)n+ \sigma(w))}/{|y|}$$
 are rational and   $\sigma(U)=-V$, where
$$U= {w(2rn-\sigma(w))}/{|y|},\   V={w(w-2rn)}/{|y|}.$$
Therefore, we obtain a $(\K,\ta,n)$-triangle of Type 4.
\end{enumerate}
\end{proof}
\begin{proof}[\bf{Proof of Proposition \ref{L1}}]
If we suppose that there is a $(\K,\ta,n)$-triangle of Type 2, say 
$(U,V,W)_{\ta}=(u,v,w\sqrt{m})$ with $u,v,w \in \Q^{+}$, then $(x,y,z)=(ru-sv,v,mrw)$ 
is  a non-zero solution of the equation
\begin{equation}\label{eq1}
z^2=mx^2+m(r^2-s^2)y^2.
\end{equation} 
And, if there is a $(\K,\ta,n)$-triangle of Type 3, say $(U,V,W)_{\ta}=(u-v\sqrt{m}, u+v\sqrt{m}, w)$ 
such that  $\sigma(U)=V$, then $(x,y,z)=(u,v,rw)$ is  a non-zero solution of
\begin{equation}\label{eq2}
z^2=2r(r-s)x^2+2mr(r+s)y^2.
\end{equation} 
Similarly, if $(U,V,W)_{\ta}=(-u+v\sqrt{m}, u+v\sqrt{m}, w)$ is a $(\K,\ta,n)$-triangle of Type 4 
such that $\sigma(U)=-V$, then $(x,y,z)=(u,v,rw)$ satisfies 
\begin{equation}\label{eq3}
z^2=2r(r+s)x^2+2mr(r-s)y^2.
\end{equation} 
By the Hasse local-global principle, the equations (\ref{eq1}), (\ref{eq2}) and (\ref{eq3}) have solutions in $\Q$ if and only if  they have a solution in $\Q_p$ for every prime $p$, where $\Q_p$ is the field of $p$-adic numbers.
We assume that $A=\sqf (r^2-s^2)$, and for a prime $p$  the pair $(m, A)$ 
($(m, B)$, and $(m, C)$, resp.) can  be written as 
$(p^\alpha a, p^\beta b )$, where $\alpha, \beta \in \{0,1 \}$ and $\gcd(p,a,b)=1$. 
Then, using Hilbert symbols {\cite[Theorem 1, III]{serr}}, the equations (\ref{eq1}), (\ref{eq2}) and (\ref{eq3}) have solutions 
in $\Q_2$ if and only if one of the following cases happens:
\begin{enumerate}[\upshape  i)] 
 \item $(\alpha, \beta)=(0,0)$ and $(a,b) \mathop {\hspace{.2mm}\not  \equiv} \limits^{4 } (3,3)$;
 \item $(\alpha, \beta)=(0,1)$ and $(a,b) \mathop {\hspace{.2mm}\not  \equiv} \limits^{8 }  
 (3,1), (3,5), (7,5), (7,7)$;
 \item $(\alpha, \beta)=(1,0)$ and $(a,b) \mathop {\hspace{.2mm}\not  \equiv} \limits^{8 } 
 (1,3), (1,5), (3,5), (3,7), (5,3), (5,7),(7,3), (7,7)$;
 \item  $(\alpha, \beta)=(1,1)$ and  $(a,b) \mathop {\hspace{.2mm}\not  \equiv} \limits^{8 } 
 (1,3), (1,5), (3,1), (3,3),(5,1), (5,7),(7,5), (7,7)$.
 \end{enumerate}
Also, the equations (\ref{eq1}), (\ref{eq2}) and (\ref{eq3}) have solutions in $\Q_p$ with $p \mathop   \equiv \limits^{4 } 1$  if and only if one of the following happens:
\begin{enumerate}[\upshape  i)] 
 \item $(\alpha, \beta)=(0,1)$ and $\big(\frac{a}{p}\big)=1$;
 \item $(\alpha, \beta)=(1,0)$ and $\big(\frac{b}{p} \big)=1$;
 \item $(\alpha, \beta)=(1,1)$ and $\big(\frac{a}{p} \big)\big(\frac{b}{p} \big)=1$. 
 \end{enumerate}
\end{proof}
\begin{proof}[\bf{Proof of Theorem \ref{th3}}]
\textit{\textbf{Case 1}}. $n$ and $mn$ are $(\Q,\ta)$-congruent numbers. 
Consider the $(\Q,\ta,n)$-triangle $(U_1,V_1,W_1)_\ta$ and the  
$(\Q,\ta,mn)$-triangle $(U_2,V_2,W_2)_\ta$, where
$$0<U_1\leq V_1<W_1, \ 2rn=U_1V_1,\ U_1^2+V_1^2-\frac{2sU_1V_1}{r}=W_1^2,$$
$$0<U_2\leq V_2<W_2, \ 2rmn=U_2V_2,\ U_2^2+V_2^2-\frac{2sU_2V_2}{r}=W_2^2.$$
Hence, $(U_2/\sqrt{m},V_2/\sqrt{m},W_2/\sqrt{m})_\ta$ is 
a $(\K,\ta,n)$-triangle.
Recall the maps $\varphi$ and 
$\psi$ in the proof of Theorem \ref{th2} and  put 
$$P=(u,v)=\varphi((U_1,V_1,W_1))+\varphi((U_2/\sqrt{m},V_2/\sqrt{m},
W_2/\sqrt{m})).$$ 
Then the  additive law on $E_{n,\ta}(\K)$ implies $u= a+b\sqrt{m}$, where
$$a= \frac{m^3W_1^2(V_1^2-U_1^2)^2+W_2^2(V_2^2-U_2^2)^2 }{4m(W_2^2-mW_1^2)^2} - 
\big(\frac{W_1^2}{4} + \frac{W_2^2}{4m} +2sn \big)>0,$$
$$b= -\frac{W_1W_2(V_1^2-U_1^2)(V_2^2-U_2^2)\sqrt{m}}{2(W_2^2-mW_1^2)^2}.$$
 We may assume $v\geq 0$.
Since $(u,v)\in T$, then $\psi((u,v))\in S$ which indicates the 
sides of a $(\K,\ta,n)$-triangle $(U,V,W)_\ta$.
In fact, if we suppose 
$U=u_1+u_2\sqrt{m},  V=v_1+v_2\sqrt{m}$ and $ W=w_1+w_2\sqrt{m},$
where $u_1,u_2,v_1,v_2,w_1,w_2$ are rational, then 
$$w_1=\pm \sqrt{2(a \pm \sqrt{a^2- mb^2})},  \ \ w_2=\frac{2b}{w_1},$$
and
$$U=(\alpha_1 -\beta_1 ) + (\alpha_2 -\beta_2 )\sqrt{m},\ \ 
V=(\alpha_1 +\beta_1 ) + (\alpha_2 +\beta_2 )\sqrt{m},$$
where 
$$\alpha_1=\pm \sqrt{\frac{(a+(r+s)n) \pm \sqrt{(a+(r+s)n)^2 -mb^2}}{2}}, 
\ \ \alpha_2 =\frac{b}{2\alpha_1},$$
 $$\beta_1=\pm \sqrt{\frac{(a-(r-s)n) \pm \sqrt{(a-(r-s)n)^2 -mb^2}}{2}}, 
 \ \ \beta_2 =\frac{b}{2\beta_1}.$$
\par
Conversely, suppose to the contrary that  $n$ or $mn$ is not  
 $\ta$-congruent over $\Q$. First,
 assume  $n$ is not    $\ta$-congruent over $\Q$  but 
$mn$ is $\ta$-congruent over $\Q$. By Theorem \ref{th2} (1), 
there is no   $(\K,\ta, n)$-triangle $(U,V,W)_\ta$ satisfying 
the conditions $0< U\leq V <W$, $W\not \in \Q$
and $W\sqrt{m} \not \in \Q$. 
\par
\textit{\textbf{Case 2}}. $mn$ is not $\ta$-congruent over $\Q$
but $n$ is  $(\K,\ta)$-congruent. Let  $(U,V,W)_\ta$ 
denotes the sides of the corresponding $(\K,\ta, n)$-triangle.
Multiplying the three sides by $\sqrt{m}$, we get the 
$(\K,\ta, mn)$-triangle $(U\sqrt{m},V\sqrt{m},W\sqrt{m})_\ta$. 
For the  positive integer $mn$, we define the map $\varphi'$ 
 in the same way as  $\varphi$. Put 
 $$2P'=\varphi'((U\sqrt{m},V\sqrt{m},W\sqrt{m}))$$ for some point 
 $P' \in E_{mn,\ta}(\K)$.
For the generator $\sigma$ of $\Gal(\K/\Q)$, since 
$P'+\sigma(P')$ is an element in $E_{mn,\ta}(\Q)$ and $mn$ is 
not  $\ta$-congruent over $\Q$, we have 
$$P'+\sigma(P')\in T_{mn,\ta}(\Q)=\{ \infty, (0,0), (-(r+s)mn,0), 
((r-s)mn,0)\}. $$
Therefore, by the same way as in the proof of Theorem \ref{th2}, 
one of the following cases necessarily happens:
\begin{enumerate}[\upshape {Type} 1.]
 \item $U $, $V $, $W \in \Q$;
 \item $U \sqrt{m}$, $V \sqrt{m}$, $W \in \Q$;
 \item $U$, $V \in K\backslash \Q$ such that $\sigma(U)=V$, $W \sqrt{m}\in \Q$; 
 \item $U$, $V \in K\backslash \Q$ such that $\sigma(U)=-V$, $W \sqrt{m}\in \Q$.
 \end{enumerate}
Hence, there is no $(\K,\ta, n)$-triangle $(U,V,W)_\ta$ 
with $W\not \in \Q$  and $W\sqrt{m}\not \in \Q$.
\par
\textit{\textbf{Case 3}}. Both $n$ and $mn$ are not  $\ta$-congruent numbers over $\Q$,
   where $mn\neq 2,3,6$. If $m\neq \sqf(2r(r-s))$, 
  by Theorem \ref{th1},
$n$ is not  $(\K,\ta,n)$-congruent. If $m=\sqf(2r(r-s))$  
and $n$ is   
$(\K,\ta,n)$-congruent, we have $U=V$ for all  $(\K,\ta,n)$-triangles 
$(U,V,W)_\ta$. Hence, there is no any $(\K,\ta,n)$-triangle  
$(U,V,W)_\ta$ with $W\not \in \Q$ and $W\sqrt{m}\not \in \Q$.
We have completed the proof of Theorem \ref{th3}.
\end{proof}
\section{Examples}
In this section, we  give some examples of $(\K,\ta)$-congruent numbers 
and verify all four types of $(\K,\ta,n)$-triangles
in  Theorem \ref{th2} in the  cases $\ta= \pi/3, 2\pi/3.$  
Given $n$, let $(U,V,W)_{\ta}$ be a  $(\K,\ta,n)$-triangle. 
Then, we have
$$0<U\leq V< W, \ \    UV=2rn, \ \ W^2=U^2+V^2-\frac{2s}{r}UV. $$
For any $(U,V,W)_{\ta}$,  
$\varphi((U,V,W))= ( {W^2}/{4},{W(V^2-U^2)}/{8})$
is a point of $2E_{n,\ta}(\K)\backslash \{\infty\}.$
Also, for any point $(u,v) \in 2E_{n,\ta}(\K)\backslash \{\infty\},$
 $$\psi \big((u,v)\big)=\big((\sqrt{u+(r+s)n} - \sqrt{u-(r-s)n},
\sqrt{u+(r+s)n} + \sqrt{u-(r-s)n},2\sqrt{u}) \big). $$
\par In our computations we have used  Cremona's MWrank program \cite{crem} 
and the number theoretic Pari software \cite{pari}. 

\subsection*{{\bf I) Case $\ta =\pi/3$}}
In this case, we have $r=2$, $s=1$, and $\alpha_{\ta}=\sqrt{3}$, and hence the area 
of any $(\K,\pi/3,n)$-triangle  is $n\sqrt{3}.$
\begin{example}\rm
\label{ex1}
Take $n=3$ and $m=13$. We have the following $(\Q(\sqrt{13}),\pi/3 ,3)$-triangles 
of types 1, 2, 3 and 4 in Theorem \ref{th1} and the corresponding 
points in the set  $2E_{3,\pi/3}(\Q(\sqrt{13}))\setminus \{\infty\}.$ 
\begin{enumerate}[\upshape Type 1.]
\item 
 An easy computing shows that the rank of $E_{39,\pi/3}(\Q)$ is 2, and the   generators of the 
 group  are $P_1=[-9,-216]$ and $P_2=[75,-720]$. 
We have
$$2P_1=[1894/16, -91805/64] \in 2E_{39,\ta}(\Q)\backslash \{\infty\}.$$  
Now, using the map $\varphi$ and $\psi$, defined in the proof of the Theorem 1.1 we get a rational $\pi/3$-triangle $\big (13/2, 24, 43/2  \big)$ with area $39$,
which gives the following 
$(\Q(\sqrt{13}),\pi/3,3)$-triangle of Type 1:
$$(U,V,W)_{\pi/3}= ({\sqrt{13}}/{2}, {24\sqrt{13}}/{13}, 
{43\sqrt{13}}/{26})$$
which corresponds to  the following point
 $Q= ({1894}/{208}, {91805\sqrt{13}}/{416})$.
\item We have a  $(\Q(\sqrt{13}),\pi/3,3)$-triangle  
$(U,V,W)_{\pi/3}=(3,4,\sqrt{13})$      of type 2 with the 
corresponding point 
$Q= ( {13}/{4}, {7\sqrt{13}}/{8}  ) .$
\item
Let $U=u-v\sqrt{13},$  $V=u+v\sqrt{13}$ and $W=w,$ where $u,v,w 
\in \Q \setminus \{0\}.$ 
Then  the pair $( u,v)$ satisfies the equation 
$u^2-13v^2=12.$
An easy solution of this equation is  $(u_0,v_0)=(5,1)$. Parametrizing  $u$ and $v$ in terms of 
$t \in \Q $  we obtain  $u= {-5t^2+26t-65}/{t^2-13} $ and $ v={t^2-10t+13}/{t^2-13}.$
By putting  these  into $w^2=u^2 +39v^2$  and taking  $t=13/4$ one can see that $w^2=u^2 +39v^2$ is a square in $\Q$. So, we obtain 
   $(U,V,W)_{\pi/3}= ( {41-11\sqrt{13}}/{3}, {41+11\sqrt{13}}/{3},{80}/{3}), $ with
   $(\Q(\sqrt{13}),\pi/3,3)$-triangle of type 3 with the
 corresponding point  $Q=( {1600}/{3}, {18040\sqrt{13}}/{9}).$
\item Let  $U=-u+v\sqrt{13},$  $V=u+v\sqrt{13}$ and $W=w,$ 
where $u,v,w \in \Q \setminus \{0\}.$ 
Then  the pair $( u,v)$ satisfies   $13v^2-u^2=12$ with a solution 
$(u_0,v_0)=(1,1)$. A similar discussion as in the previous step, taking  $t=8$,
leads us to a  $(\Q(\sqrt{13}),\pi/3,3)$-triangle 
 of Type 4,  with the corresponding point 
$Q=( {24964}/{51}, {1002352\sqrt{13}}/{51}  ).$
\end{enumerate}
\end {example}
\begin{example}
\label{ex2}
\rm
Let $n=11$ and $m=5$. One can see that $n$ is $\pi/3$-congruent 
over $\Q$ and there is a  $(\Q, \pi/3, 11)$-triangle  $(U_1,V_1,W_1)=( 55/12, 48/5,499/60 ).$
Also, $nm=55$ is $ \pi/3$-congruent  over $\Q$ and  $(U_2,V_2,W_2)=( 8,55/2,49/2 )$
is a rational $\pi/3$-triangle with area $11\sqrt{3}$.
Dividing its sides by $\sqrt{5}$, we obtain a $(\Q( \sqrt{5}), 
\pi/3, 11)$-triangle
$$(U_2/\sqrt{5},V_2/\sqrt{5},W_2/\sqrt{5})=( {8\sqrt{5}}/{5}, 
{11\sqrt{5}}/{2},{49\sqrt{5}}/{10}).$$
Now, a calculations as in   the  proof  of  Theorem \ref{th3} leads to a  
 $(\Q( \sqrt{5}), \pi/3, 11)$-triangle 
$$(U,V,W)=\big(\frac{1}{310}(1470+499\sqrt{5}),\frac{88}{5909} 
(1470-499\sqrt{5}),
\frac{1}{183179} ( 4145193-12554399\sqrt{5}) \big)$$
 satisfying in   Theorem \ref{th3}.
\end {example}
\subsection*{{\bf II) Case $\ta =2\pi/3$}}
In this case, we have $r=2$, $s=-1$, and $\alpha_{\ta}=\sqrt{3}.$ So, 
as in the case I, the area of any $(\K,2\pi/3,n)$-triangle  is $n\sqrt{3}$.
\begin{example}\rm
Take $n=17$ and $m=13$. By a similar way as in Example \ref{ex1}, we find 
the  following $(\Q(\sqrt{13}),2\pi/3 ,17)$-triangles with area $17\sqrt{13}$ of types 1, 2, 3 and 
4 preceding by  their  corresponding points in $2E_{17,2\pi/3}(\Q(\sqrt{13}))\setminus \{\infty\}.$ 
\begin{enumerate}[\upshape Type 1.]
\item
$(U,V,W)_{2\pi/3}= ( {17\sqrt{13}}/{26},8\sqrt{13},{217\sqrt{13}}/{26}),$ \\
$Q= ({47089}/{16},{9325575\sqrt{13}}/{10816})$;
\item  $(U,V,W)_{2\pi/3}= (1,68,19\sqrt{13}),$ $ Q=(13/4, 7\sqrt{13}/8); $
\item
$(U,V,W)_{2\pi/3}= ( 9-\sqrt{13},  9+\sqrt{13}, 16  ),$   $ Q=( 64,72\sqrt{13}  );$
\item 
$(U,V,W)_{2\pi/3}= ( {-5+7\sqrt{13}}/{3}, {5+7\sqrt{13}}/{3}, {44}/{3}  ),
Q=( {484}/{9}, {770 \sqrt{13}}/{27} ).$
\end{enumerate}
\end {example}
\begin{example}\rm
Let $n=19$ and $m=6.$ Then $19$ is a
$2\pi/3$-congruent number over $\Q$ and there is a 
$(\Q,2\pi/3,6)$-triangle $(U_1,V_1,W_1)=( {544}/{105}, {1995}/{136}, {254659}/{14280} )$ 
with area $19\sqrt{3}$.
Also, the integer $nm=114$ is a $ 2\pi/3$-congruent number over $\Q$
and $(U_2,V_2,W_2)=( 5, {912}/{10}, {469}/{5} )$ is a   $2\pi/3$-triangle with area $114\sqrt{3}$
from which we obtain a $(\Q( \sqrt{6}), 2\pi/3, 19)$-triangle 
$$( {5\sqrt{6}}/{6}, {76\sqrt{6}}/{5}, {469\sqrt{6}}/{30} ).$$
By a similar methods as in  Example \ref{ex2}, one can find  a
 $(\Q( \sqrt{6}), 2\pi/3, 19)$-triangle 
\begin{multline*} 
(U,V,W)_{2\pi/3}= \big((25449816+4838521\sqrt{6} )/4683550, \\
{20}( 4145193-12554399\sqrt{6} )/{28499829},\\
{7} (3589965612532-2573211605723\sqrt{6})/{1170880474675} \big),
\end{multline*}
satisfying  Theorem \ref{th3}. 
\end {example}

\end{document}